\documentclass{amsart}
\usepackage{amssymb}
\usepackage{comment}
\usepackage{color}
\usepackage{bm}
\newtheorem{theorem}{Theorem}[section]
\newtheorem{lemma}[theorem]{Lemma}

\theoremstyle{definition}
\newtheorem{definition}[theorem]{Definition}

\newtheorem{question}[theorem]{Question}
\theoremstyle{remark}

\numberwithin{equation}{section}

\begin{document}

% \title[short text for running head]{full title}
\title[one-sided subshifts]{The conjugacy relation on one-sided subshifts is non-treeable}

%    Only \author and \address are required; other information is
%    optional.  Remove any unused author tags.

%    author one information
% \author[short version for running head]{name for top of paper}

\author{Ruiwen Li}
\address{School of Mathematical Sciences and LPMC, Nankai University, Tianjin 300071, P.R. China}
\curraddr{}
\email{rwli@mail.nankai.edu.cn}
\thanks{The author would like to thank Su Gao, Bo Peng and Marcin Sabok for their valuable advice. The author acknowledges the partial support of his research by the National Natural Science Foundation of China (NSFC) grant 124B2001. This work was done when the author visiting McGill University, the author acknowledges the support by McGill University.}
\date{\today}

\dedicatory{}

%    "Communicated by" -- provide editor's name; required.
\commby{}

%    Abstract is required.

\begin{abstract}
 In this paper we study the conjugacy relation on one-sided subshifts in the viewpoint of descriptive set theory. We show the conjugacy relation on one sided subshifts with the alphabet set $\{0,1\}$ is non-treeable and non-amenable.
\end{abstract}

\maketitle

\section{Introduction}
The descriptive set-theoretic complexity theory for equivalence relations on standard Borel spaces was developed by Becker, Kechris, Hjorth and others (see \cite{Becker}, \cite{Hjorth}). This theory gives us a framework to compare the complexity of different equivalence relations, from descriptive set theory or from other aspects of mathematics. Let $E$ and $F$ be equivalence relations defined on standard Borel spaces $X$ and $Y$, respectively. Then we say $E$ is \textbf{Borel reducible} to $F$, denoted by $E\le_B F$, if there is a Borel-measurable map $f$ from $X$ to $Y$ such that for every $x,y\in X$, we have $$xEy\iff f(x)Ff(y).$$ If $E\le_B F$, then we regard $F$ as a more complicated equivalence relation than $E$ under this framework. When $E\le_B F$ and $F\le_B E$, we say $E$ is \textbf{Borel bireducible} with $F$, denoted by $E\sim_B F$. If $E\sim_B F$, then we regard that $E$ and $F$ have the same complexity.

In this paper we talk about the conjugacy relation on dynamical systems. A (topological) \textbf{dynamical system} is a pair $(X,T)$ where $X$ is a compact metrizable space and $T$ is a continuous map from $X$ to $X$. A dynamical system $(X,T)$ is \textbf{conjugate} to another dynamical system $(Y,S)$ if there is a homeomorphism map $f$ from $X$ to $Y$ such that $fT=Sf$, where $f$ is called a \textbf{conjugacy map}. A dynamical system $(X,T)$ is \textbf{invertible} if $T$ is a homeomorphism from $X$ to $X$. For a dynamical system $(X,T)$ and $x\in X$, the \textbf{orbit} of $x$ is the set $\{T^n(x):n\in\mathbb{N}\}$, denoted by $\mathcal{O}(x)$. A dynamical system $(X,T)$ is \textbf{transitive} if there is a point $x\in X$ whose orbit is dense in $X$. A dynamical system $(X,T)$ is \textbf{minimal} if for every point $x\in X$, $\mathcal{O}(x)$ is dense in $X$.

Classifying dynamical systems by isomorphism or conjugacy is an extensively studied topic in descriptive set theory. Hjorth \cite{Hjorth_1} proved the isomorphism relation on measure preserving transformations is not Borel. For ergodic measure preserving transformations, Foreman and Weiss \cite{FW} showed the isomorphism relation on ergodic measure preserving transformations is not classifiable by countable structures, then Foreman, Rudolph and Weiss \cite{FRW} showed this relation is not Borel. In this paper we focus on topological dynamical systems. Li and Peng \cite{Oxtoby} proved that the conjugacy relation on invertible minimal systems is not classifiable by countable structures. We often study dynamical systems with a specific compact metric space. For Cantor systems, Carmelo and Gao \cite{CarmeloGao} proved that the conjugacy relation on invertible Cantor systems is Borel bireducible with the isomorphism relation of countable graphs. Kaya \cite{Kaya} proved that $=^+$ is Borel reducible the conjugacy relation on invertible Cantor minimal systems. And a recent result by Deka, Garc\'{i}a-Ramos, Kasprzak, Kunde and Kwietniak \cite{NonBorel} shows that the conjugacy relation on invertible Cantor minimal systems is not Borel. For interval dynamical systems, Bruin and Vejnar \cite{BV} showed that the conjugacy relation on invertible interval systems is Borel bireducible with the isomorphism relation of countable graphs. Finally for 2-torus, Peng showed that both the conjugacy relation on invertible minimal 2-torus systems \cite{Bo_1} and the conjugacy relation on 2-torus diffeomorphisms \cite{Bo} are not classifiable by countable structures. All of these equivalence relations are somehow very complicated relations in descriptive set theory.

The complexity of the conjugacy relation on subshifts and its subclasses is an active subarea in descriptive dynamics, which is different from the classes we mentioned before. A \textbf{countable equivalence relation} is an equivalence relation whose every equivalence class is a countable set. By a famous theorem of Curtis, Hedlund and Lyndon \cite{BC}, the conjugacy relation on subshifts is a countable Borel equivalence relation. A countable Borel equivalence relation is \textbf{universal} if every countable Borel equivalence relation is Borel reducible to it. We have a universal countable equivalence relation, denoted by $E_\infty$ (see \cite[section 7.3]{main}). The equivalence relation $E_\infty$ has much lower complexity than what we mentioned in the last paragraph. Clemens \cite{Clemens} proved that the the conjugacy relation on subshifts is Borel bireducible with $E_\infty$. By \cite{AK}, the Borel reduction on countable Borel equivalence relation is still very complicated. We have some properties to judge the complexity of a countable Borel equivalence relation. If an equivalence relation is Borel reducible to the identity relation on $\mathbb{R}$, we say it is \textbf{smooth}. The equivalence relation $E_0$ is not smooth, and countable equivalence relations that are Borel reducible to it are called \textbf{hyperfinite} relations. Every hyperfinite equivalence relation is \textbf{amenable} and \textbf{treeable}. Thomas \cite{Thomas} showed that the conjugacy relation on Toeplitz subshifts is not smooth. Gao, Li, Peng and Sun \cite{rank2} showed that the conjugacy relation on topological rank 2 Toeplitz subshifts is hyperfinite. Deka, Kwietniak, Peng and Sabok \cite{paper} proved that the conjugacy relation on subshifts with the specification property is non-treeable and non-amenable.

However, as a natural analogue of subshifts and an important example of dynamical systems, we know very little about the complexity of the conjugacy relation on one-sided subshifts. In fact, until now we do not know the exact complexity of the conjugacy relation on the full class of one-sided subshifts. At the end of \cite{Clemens}, Clemens asked this question.

\begin{question} 
    (\cite{Clemens}) What is the complexity of the isomorphism of one-sided shifts on $\{0,1\}^\mathbb{N}$?
\end{question}

By the proof of \cite[Theorem 6.4]{paper}, we can get that the conjugacy relation on transitive one-sided subshifts with an alphabet set large enough is non-treeable. But here, not as the two-sided case, it's nontrivial to shrink the alphabet set, we will do some modifications to the construction in \cite{paper}. Then we can partially answer the question.

\begin{theorem}
    The conjugacy relation on transitive one-sided subshifts with the alphabet set $\{0,1\}$ is a non-treeable and non-amenable countable Borel equivalence relation.
\end{theorem}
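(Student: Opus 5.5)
Following the approach of \cite{paper}, we will construct a Borel reduction from a known non-treeable, non-amenable countable Borel equivalence relation into the conjugacy relation on transitive one-sided subshifts over $\{0,1\}$. Treeability and amenability are both inherited downward along Borel reductions whose image meets a.e.\ class appropriately. More precisely, if $E$ is Borel reducible to a treeable (resp.\ amenable) countable Borel equivalence relation by a countable-to-one reduction, then $E$ is treeable (resp.\ amenable). So it suffices to produce such an injective-on-classes Borel reduction $\Phi$. As source we take the orbit equivalence relation of the shift action of $F_2\times F_2$ (or $\mathbb{F}_2 \times \mathbb{Z}$, as in \cite{paper}) on the free part of $2^{G}$. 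This relation carries an invariant probability measure that is not treeable, by Gaboriau's $\ell^2$/cost results, and not amenable, since the group is non-amenable and the action is free and measure preserving.

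\textbf{Step 1 (coding).} To each $x$ in the source space we associate a one-sided subshift $X_x\subseteq\{0,1\}^{\mathbb{N}}$. We first fix a family of binary \emph{marker words} $w_0,w_1,\dots$ that are mutually non-overlapping and synchronizing, for example words of the form $1^{k}0\,u\,0 1^{k}$ with long runs of $1$'s that cannot occur elsewhere. Then $X_x$ is the orbit closure of a point that concatenates, along an enumeration of finite patterns of $x$ around the identity, blocks encoding (group element, label) data separated by markers. This gives transitivity for free. The modification needed for $\{0,1\}$ is to replace the large alphabet of \cite{paper} by this block code. In the one-sided setting the code must be recognizable from the left, i.e.\ prefix-free with bounded synchronization delay, because one-sided conjugacies are sliding block codes with memory $0$ and only anticipation.

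\textbf{Step 2 (reduction).} If $x' = g\cdot x$, we will exhibit an explicit sliding block code, with anticipation, that rewrites the blocks. This works because the generators of $G$ act by local recodings of the encoded patterns, so $X_x\cong X_{x'}$. Conversely, if $\phi:X_x\to X_{x'}$ is a conjugacy, it is a block code with some anticipation $r$. Applying it to the marker-synchronized structure, we will show that $\phi$ maps long blocks to long blocks up to a bounded shift, and hence induces a group element $g$ with $x'=g\cdot x$, with only countably many choices of $\phi$. Borelness of $\Phi$ is routine, since $X_x$ depends continuously on $x$ in the hyperspace.

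The main obstacle is the converse direction in the binary, one-sided case. With only two symbols, a block code can blur the boundaries between markers and data. Moreover, one-sided conjugacy invariants (for instance the number of preimages under $\sigma$, and the fact that $\sigma$ need not be injective) impose extra rigidity but also extra pitfalls. My first attempt would be to make the markers so long and rigid, with $1$-runs of length strictly larger than anything occurring in the data and growing along the construction, that any sliding block code with anticipation $r$ must send sufficiently long markers to markers. Then I would recover $g$ from the bounded displacement on the marker skeleton, using freeness of the action to ensure that $g$ is determined.
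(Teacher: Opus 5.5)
Your argument has a genuine gap at exactly the point you flag. The converse direction of your reduction, that a conjugacy $X_x\to X_{x'}$ forces $x'\in G\cdot x$, is never proved; only a strategy is sketched. Nothing in the sketch rules out conjugacies that blur marker and data regions, or conjugacies not induced by any group element, and controlling every conjugacy between binary one-sided subshifts is the genuinely hard problem here. The missing idea is that you do not need this direction at all. Treeability and amenability pass to Borel subequivalence relations and to restrictions to Borel sets (\cite[Theorem 3.3(iii)]{JKL}, \cite[Theorem 2.15(iii)]{JKL}). So it suffices to have a Borel injection $\varphi$ with $x\,E\,y\Rightarrow\varphi(x)\cong\varphi(y)$. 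Then $E\subseteq\varphi^{-1}(\cong)$, and via $\varphi$ this relation is Borel isomorphic to conjugacy restricted to $\varphi(X)$. Non-treeability and non-amenability of $E$ therefore transfer to conjugacy. This is exactly what the paper does.

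Once only a homomorphism is required, all the content sits in the forward direction, and your Step 2 does not establish it for the source you chose. For the shift action of $F_2\times F_2$, translating by a generator relabels group elements $k\mapsto sk$. This changes the lengths of their names and turns patterns around the identity into patterns around $s$. It is not clear that this is a bounded-anticipation sliding block code, nor that it maps the orbit closure $X_x$, including its limit points, onto $X_{x'}$.

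The paper avoids coding an abstract action altogether. It works on the space $\mathcal{S}(A)$ of \cite{paper} with a group $\Gamma\cong(\mathbb{Z}_2*\mathbb{Z}_2*\mathbb{Z}_2)^2$ generated by $\#$-preserving involutions $f_i$ of $A^{\mathbb{Z}}$, so the action preserves conjugacy automatically. It checks that $\Gamma$ has no almost trivial non-identity element. By the criterion of \cite{paper}, the action is then measure preserving and a.e.\ free, hence non-treeable by \cite{PP} (via $F_2\times F_2\le\Gamma$) and non-amenable. The $f_i$ are chosen so that their block codes read only $x(k)x(k+1)$, i.e.\ with memory $0$. A fixed-length, uniquely readable substitution $\rho$ then turns $f_1$ into an explicit one-sided conjugacy $h\colon\varphi(X_1)\to\varphi(X_2)$. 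Its blocks have length $22$, and $\rho(a_i)$ and $\rho(\tilde a_i)$ share their last $14$ symbols, which takes care of the initial partial block.
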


\section{Preliminaries}

In this paper we always let $A$ be a finite alphabet set such that $|A|\ge2$. Denote the set of all finite words of $A$ by $A^*$, in other words, $A^*=A^{<\omega}$. Denote the set of all finite words of $A$ whose length are odd by $A^*_{\rm odd}$. For $x\in A^*$ or $x\in A^\mathbb{N}$ or $x\in A^\mathbb{Z}$, if $a< b$ such that $a$ and $b-1$ are in the domain of $x$, then by $x[a,b)$ we mean the unique word $w\in A^*$ such that $|w|=b-a$ and $w(i)=x(a+i)$ for every $0\le i<b-a$. For $w,v\in A^*$, we denote the concatenation of $w$ and $v$ by $wv$, in other words, $$wv\in A^*,|wv|=|w|+|v|,wv[0,|w|)=w\;\,{\rm and}\;\,wv[|w|,|w|+|v|)=v.$$

%By a \textbf{dynamical system} we mean a pair $(X,T)$ where $X$ is a compact metric space and $T$ is an automorphism on $X$. For dynamical systems $(X,T)$ and $(Y,S)$, we say $(X,T)$ and $(Y,S)$ are \textbf{conjugate} if there is a homeomorphism $f$ from $X$ to $Y$ such that $fT=Sf$. Here $f$ is called a \textbf{conjugacy map} from $(X,T)$ to $(Y,S)$.

%Similarly, a \textbf{semi dynamical system} is a pair $(X,T)$ such that $X$ is a compact metric space and $T$ is a continuous map from $X$ to $X$. Semi dynamical systems $(X,T)$ and $(Y,S)$ are \textbf{conjugate} if there is a homeomorphism $f$ from $X$ to $Y$ such that $fT=Sf$. We still call $f$ a \textbf{conjugacy map} from $(X,T)$ to $(Y,S)$.

%For a dynamical system or semi dynamical system $(X,T)$ and $x\in X$, the \textbf{orbit} of $x$ is the set $$\{T^n(x):n\in\mathbb{N}\},$$ denoted by $\mathcal{O}(x)$. A dynamical system or semi dynamical system $(X,T)$ is \textbf{transitive} if there is $x\in X$ such that $\mathcal{O}(x)$ is dense in $X$.

A canonical example for invertible dynamical system is subshift. The \textbf{Bernoulli shift} is a dynamical system $(A^{\mathbb{Z}},S)$ with the compact metric space $A^\mathbb{Z}$ where $A$ is a finite alphabet set, and with the left shift action $S$ such that $$\forall n\in\mathbb{Z}\forall x\in A^\mathbb{Z}\; S(x)(n)=x(n+1).$$ A \textbf{subshift} with the alphabet set $A$ is a closed subset $X$ of $A^\mathbb{Z}$ such that $S(X)=X$. We view a subshift $X$ as the invertible dynamical system $(X,S)$.

We have analogous concepts for non-invertible systems. The \textbf{one-sided full shift} is a dynamical system $(A^{\mathbb{N}},S)$ with the compact metric space $A^\mathbb{N}$ where $A$ is a finite alphabet set, and with the left shift action $S$ such that $$\forall n\in\mathbb{N}\forall x\in A^\mathbb{Z}\; S(x)(n)=x(n+1).$$Note that when $|A|\ge2$, the left shift action on the one-sided full shift is not invertible. A \textbf{one-sided subshift} with the alphabet set $A$ is a closed subset $X$ of $A^\mathbb{N}$ such that $S(X)\subset X$. Then a one-sided subshift $X$ can be regarded as the dynamical system $(X,S)$.

To study the conjugacy relation on subshifts and one sided subshifts in the viewpoint of descriptive set theory, we should give the class of (one-sided) subshifts a standard Borel structure. For a Polish space $X$, the Vietoris topology on the class of all compact subsets of $X$, denoted by $K(X)$, is a Polish topology (see \cite[Section 4.F]{Kechris}). The set of all subshifts with finite alphabet set $A$ is a $G_\delta$ subset of $K(A^\mathbb{Z})$, and the set of all one-sided subshifts with finite alphabet set $A$ is a $G_\delta$ subset of $K(A^\mathbb{N})$ (see \cite[Fact 2.1]{Oxtoby}). This gives the class of (one-sided) subshifts a Polish topology.

For the conjugacy relation on subshifts, note the following theorem.
\begin{theorem}\!\!\mbox{{\rm (Curtis–Hedlund–Lyndon, \cite{BC})}}
     Let $A$ be a finite alphabet set, let $X,Y \subset A^\mathbb{Z}$
be subshifts, and let $\varphi: X \to Y$ be a conjugacy map from $(X,S)$ to $(Y,S)$. Then there
exists $n\in \mathbb{N}$ and a function $C: A^{2n+1} \to A$ such that for all $x \in X$ and $i \in\mathbb{Z}$,
$$\varphi(x)(i) = C(x[i-n,i +n+1)).$$
\end{theorem}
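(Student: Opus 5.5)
The plan is the standard compactness argument. Only two properties of $\varphi$ are used: it is continuous and it satisfies $\varphi S=S\varphi$. Invertibility is not needed. The idea is to first obtain a local rule for the $0$-th coordinate, and then use the commutation with the shift to transport that rule to every coordinate $i\in\mathbb{Z}$.

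\textbf{Step 1 (uniform continuity).} Since $X$ is a closed subset of the compact space $A^\mathbb{Z}$, it is compact, so $\varphi$ is uniformly continuous on $X$. Consider the map $x\mapsto\varphi(x)(0)$ from $X$ to the discrete finite set $A$. It is continuous, so the sets $U_a=\{x\in X:\varphi(x)(0)=a\}$, for $a\in A$, form a finite partition of $X$ into clopen sets.

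The cylinder sets $[w]_{-n}=\{x:x[-n,n+1)=w\}$ form a base of the topology. By compactness, each $U_a$ is a finite union of such cylinders. Taking $n$ to be the maximum of the radii that occur, we get: if $x,y\in X$ and $x[-n,n+1)=y[-n,n+1)$, then $\varphi(x)(0)=\varphi(y)(0)$. Equivalently, one can take a Lebesgue number for the open cover $\{U_a\}$ with respect to the usual metric $d(x,y)=2^{-\min\{|k|:x(k)\neq y(k)\}}$.

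\textbf{Step 2 (defining $C$).} For a word $w\in A^{2n+1}$ that occurs as $x[-n,n+1)$ for some $x\in X$, set $C(w)=\varphi(x)(0)$. By Step 1 this value does not depend on the choice of $x$. On all other words of length $2n+1$, define $C$ arbitrarily, for instance as a fixed letter of $A$.

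\textbf{Step 3 (shift-equivariance).} Fix $x\in X$ and $i\in\mathbb{Z}$. Since $S(X)=X$, we have $S^i x\in X$. By the definition of $S$, $(S^i x)(k)=x(k+i)$, so $(S^i x)[-n,n+1)=x[i-n,i+n+1)$. Using $\varphi S^i=S^i\varphi$, we then compute
$$\varphi(x)(i)=(S^i\varphi(x))(0)=\varphi(S^ix)(0)=C\big((S^ix)[-n,n+1)\big)=C\big(x[i-n,i+n+1)\big).$$
For negative $i$ this uses $S^{i}=(S^{-1})^{|i|}$, which is legitimate because $S$ is a bijection on $A^\mathbb{Z}$ and $\varphi$ commutes with $S^{-1}$ as well.

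\textbf{Main obstacle.} The only non-formal point is Step 1: extracting a single radius $n$ that works uniformly over all of $X$. This is exactly where compactness of $X$ enters, through either the finite-cover argument or uniform continuity. Everything after that is bookkeeping with the shift.
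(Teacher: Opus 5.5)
Your proof is correct and complete. It is the standard compactness proof of the Curtis--Hedlund--Lyndon theorem: uniform continuity of $x\mapsto\varphi(x)(0)$ gives a single radius $n$ at coordinate $0$, and $\varphi S^i=S^i\varphi$ transports this to every $i\in\mathbb{Z}$.

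The paper gives no proof of this statement and simply cites \cite{BC}, so there is no argument of its own to compare against.

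Your observation that invertibility of $\varphi$ is never used is what justifies the paper's later ``similarly'' for one-sided subshifts. In that setting the same argument works with forward cylinders $\{x:x[0,n+1)=w\}$ and only $i\ge 0$. It relies only on $S(X)\subset X$, with no need for $S^{-1}$, and yields a one-sided block code $\varphi(x)(i)=C(x[i,i+n+1))$.
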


Here $C$ is called a \textbf{block code} of $f$. Since $f$ is totally determined by its block code, and there are only countably many block codes, then we have that the conjugacy relation on subshifts is a countable equivalence relation. Also by the statement of this theorem, the conjugacy relation on subshifts is Borel. Similarly the conjugacy relation on one-sided subshifts is a countable Borel equivalence relation.

Let $E$ be a countable Borel equivalence relation on the standard Borel space $X$. We say
that $E$ is \textbf{hyperfinite} if $E=\bigcup_{n\in\mathbb{N}}E_n$ such that for every $n\in\mathbb{N}$, $E_n$ is a Borel equivalence relation whose equivalence classes are all finite, and $E_n\subset E_{n+1}$. The
equivalence relation $E$ is \textbf{treeable} \cite[Definition 3.1]{JKL} if there exists a Borel
acyclic graph $G$ on the $X$ such that for any $x,y\in X$, we have that $xEy$ if and only if $x$ and $y$ are in the same connected $G$ component. And we say $E$ is \textbf{amenable} \cite[Section 7.4] {main} if we have a sequences of Borel functions $f_n$ from $E$ to $\mathbb{R}_{\ge0}$ such that:
\begin{enumerate}
    \item[(a)] for any $x\in X$ and $n\in\mathbb{N}$, $\Sigma_{yEx}f_n(x,y)=1$;
    \item[(b)] for any $x,x'\in X$, if $xEx'$, then we have that $${\rm lim}_{n\to \infty}\Sigma_{yEx}|f_n(x,y)-f_n(x',y)|=0.$$
\end{enumerate}

A classical example of hyperfinite equivalence relation is $E_0$ defined on $\{0,1\}^\mathbb{N}$. For $x,y\in \{0,1\}^\mathbb{N}$, let $xE_0y$ if for all $n\in\mathbb{N}$ large enough, we have that $x(n)=y(n)$. Every hyperfinite equivalence relation is Borel reducible to $E_0$. Also every hyperfinite equivalence relation is treeable and amenable.

 \section{Proof of the Main Theorem}

Throughout this section let $A$ be the alphabet set with distinct symbols $a_{i}$, $\tilde{a}_i$ and $\#$ where $1\le i\le6$, in other words, $$A=\{a_i:1\le i\le6\}\cup\{\tilde{a}_i:1\le i\le6\}\cup\{\#\}.$$

The following notations are from \cite{paper}.

Given $R\subset (A\setminus \{\#\})_{\rm odd}^*$, we denote the subshift with forbidden words set $\{\#w\#:w\in R\}$ by $X(R)$, in other words, $$X(R)=\{x\in A^\mathbb{Z}: \forall\, w\in R \quad\#w\#\;is \,not\;a\;subword\;of\;X\}.$$ Note that $(A\setminus \{\#\})^\mathbb{Z}\subset X(R)$, so $X(R)$ is a nonempty subshift. Let $$\mathcal{S}(A)=\{X(R):R\subset (A\setminus \{\#\})_{\rm odd}^*\}.$$ Identify a subset of $(A\setminus \{\#\})_{\rm odd}^*$ and its characteristic function, then the map that sends $R\in \{0,1\}^{(A\setminus \{\#\})_{\rm odd}^*}$ to $X(R)$ is continuous from $\{0,1\}^{(A\setminus \{\#\})_{\rm odd}^*}$ to $\mathcal{S}(A)$ with the Vietoris topology. So $\mathcal{S}(A)$ is a compact metric space.

Let ${\rm Aut}(A^\mathbb{Z})$ be the group of conjugacy maps from $(A^\mathbb{Z},S)$ to $(A^\mathbb{Z},S)$. For $w\in A^*$, we denote the cylinder set $\{x\in A^\mathbb{Z}:x[0,|w|)=w\}$ by $[w]$.

\begin{definition}[\cite{paper}]\label{def}
    We say that $f\in {\rm Aut}(A^\mathbb{Z})$ is \textbf{$\#$-preserving} if there is a bijection
$f^*: (A\setminus\{\#\})^* \to (A\setminus\{\#\})^*$
that preserves the word length and such that for every $w \in (A\setminus\{\#\})^*$ we have
$$f([\#w\#]) = [\#f^*(w)\#].$$
We say that the corresponding length-preserving bijection $f^*$ \textbf{represents} $f$ on
$(A\setminus\{\#\})^*$. We write ${\rm Aut}(A^\mathbb{Z},\#)$ for the set of all $\#$-preserving automorphisms
$f\in {\rm Aut}(A^\mathbb{Z})$.
\end{definition}

By definition, ${\rm Aut}(A^\mathbb{Z},\#)$ is a subgroup of ${\rm Aut}(A^\mathbb{Z})$. The group ${\rm Aut}(A^\mathbb{Z},\#)$ naturally acts on $\mathcal{S}(A)$, \cite[proposition 5.3]{paper} says that, for $f\in{\rm Aut}(A^\mathbb{Z},\#)$ and $R\subset (A\setminus \{\#\})_{\rm odd}^*$, take the bijection $f^*$ representing $f$, then we have $f(X(R))=X(f^*(R))$. Take $$f\cdot X(R)=f(X(R))=X(f^*(R)),$$we get a continuous group action of ${\rm Aut}(A^\mathbb{Z},\#)$ on $\mathcal{S}(A)$.

We will use the following definition and lemma in our proof.

\begin{definition}[\cite{paper}]
     We say that $f\in{\rm Aut}(A^\mathbb{Z},\#)$ is \textbf{almost trivial} if $f^*(w) = w$ for all but finitely many $w \in (A\setminus\{\#\})^*$, where $f^*$ represents $f$.
\end{definition}

\begin{theorem}[\cite{paper}]\label{1}
     
     \begin{enumerate}
         \item The induced action of ${\rm Aut}(A^\mathbb{Z},\#)$ on $\mathcal{S}(A)$ preserves the conjugacy relation.
         \item If $\Gamma$ is a countable subgroup of ${\rm Aut}(A^\mathbb{Z},\#)$ which does not contain an almost
trivial non-identity element, then the induced action on $\mathcal{S}(A)$ preserves a probability
measure and is a.e. free.
     \end{enumerate}
\end{theorem}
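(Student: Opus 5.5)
The plan is to handle the two parts separately, using throughout the identity $f(X(R))=X(f^*(R))$ from \cite[Proposition 5.3]{paper}.

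\textbf{Part (1).} Let $f\in{\rm Aut}(A^\mathbb{Z},\#)$. Since $f$ is a homeomorphism of $A^\mathbb{Z}$ commuting with $S$, and $f(X(R))=X(f^*(R))$, the restriction $f|_{X(R)}$ is a homeomorphism onto $X(f^*(R))$ that still commutes with $S$. Hence $X(R)$ and $f\cdot X(R)$ are conjugate, so the action preserves the conjugacy relation.

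For completeness I would re-verify the identity $f(X(R))=X(f^*(R))$ directly. By shift-equivariance, $S^nx\in[\#w\#]$ if and only if $S^nf(x)\in f([\#w\#])=[\#f^*(w)\#]$. So $x$ avoids $\#w\#$ exactly when $f(x)$ avoids $\#f^*(w)\#$. Since $f^*$ is a length-preserving bijection, it maps odd-length words bijectively onto odd-length words.

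\textbf{Part (2): the invariant measure.} Let $\mu$ be the Bernoulli$(1/2)$ product measure on $\{0,1\}^{(A\setminus\{\#\})^*_{\rm odd}}$, so each odd word is put into $R$ independently with probability $1/2$.

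The map $R\mapsto X(R)$ is a continuous injection. To see injectivity, take $w\notin R$ and $a\in A\setminus\{\#\}$. The point $\cdots aa\#w\#aa\cdots$ contains exactly one block of the form $\#v\#$ with $v$ free of $\#$, namely $\#w\#$. So $\#w\#$ occurs in the language of $X(R)$ if and only if $w\notin R$, and $R$ can be recovered from $X(R)$.

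Under this injection, the action of $f$ becomes the coordinate permutation $R\mapsto f^*(R)$. A permutation of coordinates preserves a product measure, so the pushforward of $\mu$ to $\mathcal S(A)$ is a $\Gamma$-invariant probability measure.

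\textbf{Part (2): a.e. freeness.} Since $\Gamma$ is countable, it suffices to show that for each non-identity $g\in\Gamma$ the fixed set $\{R: g^*(R)=R\}$ is $\mu$-null.

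Suppose first that $g^*$ moves infinitely many odd words. Choose greedily infinitely many pairs $(w_k,g^*(w_k))$ with $w_k\ne g^*(w_k)$ and with all the pairs pairwise disjoint as sets. This is possible because each new pair only has to avoid finitely many words already used. If $g^*(R)=R$, then in particular $\chi_R(w_k)=\chi_R(g^*(w_k))$ for every $k$. These are independent events, each of probability $1/2$, so their intersection has measure $0$.

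\textbf{Main obstacle.} The hypothesis only says that $g^*$ moves infinitely many words of some length, whereas the argument above needs infinitely many \emph{odd} ones. I would argue by contradiction using the Curtis–Hedlund–Lyndon block code of $g$, of some radius $n$.

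Suppose $g^*(w)=w$ for all odd $w$ with $|w|>N$. Take a long even word $v$ and write $v=uc$ with $c$ a single letter. For any letter $c'$, the word $vc'$ is odd and long, so it is fixed by $g^*$.
\begin{itemize}
\item Each coordinate of $g^*(v)$ at distance more than $n$ from the right end is computed from a window that also occurs in $\#vc'\#$. Since $\#vc'\#$ maps to $\#vc'\#$, that coordinate equals the corresponding letter of $v$.
\item The coordinates within distance $n$ of the right end are handled symmetrically, using the odd word $c'v$.
\end{itemize}
Hence $g^*(v)=v$ for all sufficiently long $v$, so $g$ is almost trivial. Because $g\neq{\rm id}$ and $\Gamma$ contains no almost trivial non-identity element, this is a contradiction. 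Therefore $g^*$ moves infinitely many odd words, and the fixed set of $g$ is null.

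The delicate point is the boundary bookkeeping: one must check that every window read by the block code in $\#v\#$ really appears in one of the two odd comparison words $\#vc'\#$ or $\#c'v\#$. Choosing the length threshold larger than $2n+1$ should make this work.
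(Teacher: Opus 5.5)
The paper gives no proof of this statement; it is quoted from \cite{paper}, so there is no argument in the text to compare yours against step by step. Your proof is correct and self-contained, and it follows the natural route:
\begin{enumerate}
\item Part (1) is immediate from $f(X(R))=X(f^*(R))$.
\item The Bernoulli measure on $\{0,1\}^{(A\setminus\{\#\})^*_{\rm odd}}$ is invariant, because each $f$ acts by the coordinate permutation induced by $f^*$.
\item Freeness follows from choosing infinitely many disjoint moved pairs, which gives infinitely many independent events of probability $1/2$.
\end{enumerate}

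Your Curtis--Hedlund--Lyndon locality step is a genuine and valid addition. This paper defines ``almost trivial'' using all words, while only odd words serve as coordinates, so this point is not automatic. Your boundary bookkeeping does close, and $|v|\ge 2n$ together with $|v|+1>N$ already suffices. Comparing with $\#vc'\#$ (points agreeing on $(-\infty,|v|]$) fixes the letters of $g^*(v)$ with indices $0,\dots,|v|-n-1$. Comparing with $\#c'v\#$ (points agreeing on $[2,\infty)$) fixes the indices $n,\dots,|v|-1$. The decomposition $v=uc$ is never used and can be dropped.
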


Now we construct a countable subgroup $\Gamma$ of ${\rm Aut}(A^\mathbb{Z},\#)$. For $1\le i\le3$, let $g_i$ be the function from $A^2$ to $A$ such that
\begin{equation}\label{g}
    g_i(bc)= \left \{\begin{array}{lr}  a_i,&b=\tilde{a}_i;c=a_j\;where\;1\le j\ne i\le3,\;or\;c=\#.\\\tilde{a}_i, &b={a}_i;c=a_j\;where\;1\le j\ne i\le3,\;or\;c=\#.\\ b, &otherwise. \end{array} \right.
\end{equation}
And for $4\le i\le6$, let $g_i$ be the function from $A^2$ to $A$ such that$$g_i(bc)= \left \{\begin{array}{lr}  a_i,&b=\tilde{a}_i;c=a_j\;where\;4\le j\ne i\le6,\;or\;c=\#.\\\tilde{a}_i, &b={a}_i;c=a_j\;where\;4\le j\ne i\le6,\;or\;c=\#.\\ b, &otherwise. \end{array} \right.$$ For $1\le i\le6$, let $f_i$ be a map from $A^\mathbb{Z}$ to $A^\mathbb{Z}$ such that 
\begin{equation}\label{f}
    f_i(x)(k)=g_i(x(k)x(k+1)).
\end{equation}

\begin{lemma}\label{order 2}
    For every $1\le i\le6$, $f_i\in {\rm Aut}(A^\mathbb{Z},\#)$ and $f^2_i={\rm id}$.
\end{lemma}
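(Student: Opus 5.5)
Fix $1\le i\le 6$; by symmetry treat $1\le i\le3$, the case $4\le i\le6$ being identical with $\{4,5,6\}$ in place of $\{1,2,3\}$. Let $C_i=\{a_j:1\le j\ne i\le3\}\cup\{\#\}$ be the set of ``trigger'' symbols. The rule \eqref{g} says: position $k$ is changed only if $x(k)\in\{a_i,\tilde a_i\}$ and $x(k+1)\in C_i$, and then $a_i$ and $\tilde a_i$ are swapped; otherwise $x(k)$ is kept. The function $f_i$ is a sliding block code, so it is continuous and commutes with $S$. The plan is to first show $f_i^2={\rm id}$, which gives bijectivity and hence $f_i\in{\rm Aut}(A^\mathbb{Z})$. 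Then we check the $\#$-preserving property directly.

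\textbf{Step 1: $f_i^2={\rm id}$.} The key observation is that $f_i$ never changes a symbol lying in $C_i$. Indeed, $f_i$ only rewrites symbols in $\{a_i,\tilde a_i\}$, and it maps this set into itself, while $C_i\cap\{a_i,\tilde a_i\}=\emptyset$. So $x(k+1)\in C_i$ if and only if $f_i(x)(k+1)\in C_i$. Likewise $x(k)\in\{a_i,\tilde a_i\}$ if and only if $f_i(x)(k)\in\{a_i,\tilde a_i\}$. Hence position $k$ is swapped when computing $f_i(f_i(x))$ exactly when it was swapped when computing $f_i(x)$. Swapping $a_i\leftrightarrow\tilde a_i$ twice is the identity, so $f_i(f_i(x))(k)=x(k)$ for all $k$. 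Thus $f_i$ is an involution, in particular a homeomorphism commuting with $S$, and $f_i\in{\rm Aut}(A^\mathbb{Z})$.

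\textbf{Step 2: $\#$-preserving.} For $w\in(A\setminus\{\#\})^*$ with $|w|=n$, define $f_i^*(w)$ letter by letter. For $m<n-1$, set $f_i^*(w)(m)=g_i(w(m)w(m+1))$. For the last letter, set $f_i^*(w)(n-1)=g_i(w(n-1)\#)$. This is exactly the block code applied to $\#w\#$ restricted to the positions of $w$. Because the code has window $\{k,k+1\}$ only, the first $\#$ is unchanged, as is its image (it is never rewritten). The positions of $w$ depend only on $w$ and the trailing $\#$, and the trailing $\#$ is also unchanged. Hence $f_i([\#w\#])\subseteq[\#f_i^*(w)\#]$.

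The map $f_i^*$ preserves length and avoids $\#$. By the Step 1 argument applied to finite words, with the terminal $\#$ acting as a fixed trigger, $f_i^*\circ f_i^*={\rm id}$, so $f_i^*$ is a bijection. Applying the inclusion to $f_i$ and $f_i^*(w)$ gives $f_i([\#f_i^*(w)\#])\subseteq[\#w\#]$. Applying $f_i=f_i^{-1}$ to this yields the reverse inclusion $[\#f_i^*(w)\#]\subseteq f_i([\#w\#])$, so the two sets are equal.

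The only delicate point is the invariance of the trigger condition under $f_i$, used in both steps. It holds because the triggers $C_i$ and the swapped letters $\{a_i,\tilde a_i\}$ are disjoint and neither set is moved out of itself. Everything else is bookkeeping.
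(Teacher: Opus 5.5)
Your proof is correct and follows essentially the same approach as the paper. The paper's case analysis on $x(k)$ and $x(k+1)$ is your observation that the swap condition is unchanged by $f_i$, and it defines $f_i^*$ by the same formula. You additionally verify that $f_i^*$ is a bijection and that $f_i([\#w\#])=[\#f_i^*(w)\#]$ holds with equality; the paper only asserts these ``by the definition of $f_i$.''
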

\begin{proof}
    By the definition of $f_i$, the function $f_i$ is continuous and commutes with the shift action $S$. Without loss of generality assume that $1\le i\le3$. For $x\in A^\mathbb{Z}$ and $k\in\mathbb{Z}$, if $x(k)\ne a_i,\tilde{a}_i$, by the definition of $g_i$ we have that $f^2_i(x)(k)=f_i(x)(k)=x(k)$. If $x(k)=a_i$, we have 2 cases:
    
    \noindent\textbf{Case 1}. $x(k+1)=a_j$ where $1\le j\ne i\le3$, or $x(k+1)=\#$.
    
    Since $x(k+1)\ne a_i,\tilde{a}_i$, we have $f_i(x)(k+1)=x(k+1)$. By definition $f_i(x)(k)=\tilde{a}_i$. So by the definition of $g_i$, $f^2_i(x)(k)=a_i=x(k)$

    \noindent\textbf{Case 2}. Otherwise.
    
    By the definition of $f_i$ and $g_i$, $f_i(x)(k)=a_i$. If $x(k+1)\ne a_i,\tilde{a}_{i}$, then $f_i(x)(k+1)=x(k+1)$ and we have that $f^2_i(x)(k)=x(k)$. If $x(k+1)= a_i$ or $x(k+1)=\tilde{a}_{i}$, then by the definition of $g_i$, $f_i(x)(k+1)=a_i$ or $f_i(x)(k+1)=\tilde{a}_{i}$. So we get that $f^2_i(x)(k)=a_i=x(k)$ by the definition of $g_i$. This ends Case 2.

    Similarly, when $x(k)=\tilde{a}_i$, we have that $f^2_i(x)(k)=x(k)$. By the arbitrariness of $x$ and $k$, $f^2_i={\rm id}$. This also shows that $f_i$ is a bijection from $A^\mathbb{Z}$ to itself, so $f_i\in{\rm Aut}(A^\mathbb{Z})$.

    Let $f^*_i$ be the map from $(A\setminus{\#})^*$ to $(A\setminus{\#})^*$ such that $f^*_i(\emptyset)=\emptyset$ and $f^*_i(a_0\cdots a_n)=g_i(a_0a_1)g_i(a_1a_2)g_i(a_2a_3)\cdots g_i(a_n\#)$. Then by the definition of $f_i$, the map $f^*_i$ represents $f_i$, so $f_i\in {\rm Aut}(A^\mathbb{Z},\#)$.
\end{proof}

Denote the subgroup of ${\rm Aut}(A^\mathbb{Z},\#)$ generated from $\{f_i :1\le i\le 6\}$ by $\Gamma$ .

\begin{theorem}
    The group $\Gamma$ is isomorphic to $(\mathbb{Z}_2*\mathbb{Z}_2*\mathbb{Z}_2)^2$, and there is no almost
trivial non-identity element in $\Gamma$.
\end{theorem}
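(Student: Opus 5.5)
Throughout, set $\Gamma_1=\langle f_1,f_2,f_3\rangle$ and $\Gamma_2=\langle f_4,f_5,f_6\rangle$. Everything will be read off from the representing bijections $f^*$ on $(A\setminus\{\#\})^*$. By Lemma~\ref{order 2}, $f_i^*(w)$ is computed from right to left using the terminal $\#$. The only letters that change are $a_i$ and $\tilde a_i$: such a letter is toggled exactly when the next letter is either $\#$ or some $a_j$ with $j\ne i$ in the same block of indices. The plan has three steps.

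\textbf{Step 1: $\Gamma_1$ and $\Gamma_2$ commute and intersect trivially.} For $i\le 3<j$, the map $f_i$ reads and writes only the letters $a_i,\tilde a_i$, and its rule looks only at the next letter being in $\{a_1,a_2,a_3,\#\}$. It never reads or changes any of $a_4,\dots,a_6,\tilde a_4,\dots,\tilde a_6$, and symmetrically for $f_j$. So the local rules act on disjoint sets of coordinates, and their decisions depend only on letters the other map leaves fixed. Checking the block code shows $f_if_j=f_jf_i$.

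For $\Gamma_1\cap\Gamma_2=\{\mathrm{id}\}$: every element of $\Gamma_1$ fixes all words over the letters with indices $4$--$6$. If it also lies in $\Gamma_2$, it fixes all words over the letters with indices $1$--$3$. A mixed word splits into independent blocks, so such an element is the identity on all words. This gives $\Gamma\cong\Gamma_1\times\Gamma_2$, and it remains to show $\Gamma_1\cong\mathbb{Z}_2*\mathbb{Z}_2*\mathbb{Z}_2$ freely on its generators, and likewise for $\Gamma_2$.

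\textbf{Step 2: $\Gamma_1$ is a free product, via a ping-pong on test words (the main obstacle).} A reduced word is $h=f_{i_1}\cdots f_{i_n}$ with $i_k\ne i_{k+1}$, and we must show $h\ne\mathrm{id}$. I would pick a test word over $\{a_1,a_2,a_3\}$ ending in a long alternating tail and track only the \emph{tildes pattern}. Applying $f_i$ toggles the tilde on each $a_i$ or $\tilde a_i$ whose successor is untilded $a_j$ ($j\ne i$) or $\#$.

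Concretely, take $w=u_1u_2\cdots u_m$ where the $u$'s are letters $a_1,a_2,a_3$ with consecutive entries distinct, and think of applying the letters of $h$ from right to left. The idea is that a reduced word of length $n$ "propagates a tilde frontier" leftward. When $f_i$ toggles a letter to $\tilde a_i$, the letter to its left no longer sees an untilded successor, so it is frozen. I would show by induction that $h^*(w)$ differs from $w$ in a precise position determined by $n$ and $i_1,\dots,i_n$. For example, choose $w$ of the form $a_{i_1}a_{i_2}\cdots a_{i_n}$ (adjacent indices distinct, since the word is reduced) followed by $\#$. Then check that $f_{i_n}$ toggles the last letter, $f_{i_{n-1}}$ toggles a suitable earlier letter, and so on, and that the final word has a tilde which no later generator can remove, because removal would need $i_k=i_{k+1}$.

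Getting this bookkeeping exactly right is the hard part. If the direct choice fails, I would instead find an invariant, such as the position of the leftmost tilde or a normal form, that strictly grows with reduced length. The same argument gives that $h^*$ moves infinitely many words: prefix $w$ by arbitrary words over $\{a_1,a_2,a_3\}$, or by a letter such as $a_4$, which isolates the block.

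\textbf{Step 3: no almost trivial non-identity element.} Take $h=(h_1,h_2)\ne\mathrm{id}$ with $h_1\ne\mathrm{id}$. By Step 2 there is a word $w$ over indices $1$--$3$ with $h_1^*(w)\ne w$. For every word $v$ over $\{a_4,a_5,a_6,\tilde a_4,\dots\}$ that $h_2$ fixes, for instance $v=\tilde a_4^{\,k}$, the word $h^*(v\,w)=v\,h_1^*(w)$ differs from $vw$. Here the block structure is used: $h_1$ does not see $v$, and $h_2$ acting on $v$ followed by a letter $a_1$ (which is not in $\{a_4,a_5,a_6,\#\}$) leaves $\tilde a_4^k$ fixed. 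This yields infinitely many moved words, so $h$ is not almost trivial. Combined with Steps 1 and 2, this proves the theorem.
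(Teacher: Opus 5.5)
\textbf{Step 2 has a genuine gap.} It carries all the content of the theorem, but your concrete witness does not work, and the fallback (``find an invariant\ldots'') is not an argument.

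Take $h=f_1f_2f_1$ and $w=a_1a_2a_1$:
\begin{itemize}
\item $f_1^*(a_1a_2a_1)=\tilde a_1a_2\tilde a_1$, since both occurrences of $a_1$ are followed by $a_2$ or $\#$;
\item $f_2^*$ fixes $\tilde a_1a_2\tilde a_1$, since $a_2$ is followed by a tilded letter;
\item $f_1^*(\tilde a_1a_2\tilde a_1)=a_1a_2a_1$.
\end{itemize}
So $h^*(w)=w$. Your own heuristic explains why. Once $f_{i_n}$ puts a tilde on the last letter, the letter to its left is frozen, so $f_{i_{n-1}}$ cannot toggle it. Moreover, $f_{i_n}$ also toggles every interior occurrence of $a_{i_n}$ in $a_{i_1}\cdots a_{i_n}$, because each is followed by an untilded $a_j$ with $j\neq i_n$. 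So there is no single front to track, and prefixing does not help: the suffix evolves independently of any prefix.

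The missing idea is to start from the all-tilded word, as the paper does: $v=u\,\tilde a_{i_1}\tilde a_{i_2}\cdots\tilde a_{i_n}$ with $u\in\{a_1,a_2,a_3\}^*$ arbitrary. Initially every letter of the tilded suffix except the last is frozen. Argue by downward induction on $k$, from $n$ to $1$. When $f_{i_k}$ is applied, the position holding $\tilde a_{i_k}$ is still unchanged:
\begin{itemize}
\item each earlier $f_{i_l}$ with $l>k+1$ saw a still-tilded right neighbour;
\item $f_{i_{k+1}}$ does not act on it, because $i_{k+1}\neq i_k$.
\end{itemize}
Meanwhile its right neighbour has just become $a_{i_{k+1}}$, or is $\#$ when $k=n$. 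This lies in $B_{i_k}=\{\#\}\cup\{a_j:j\neq i_k,\ j\le 3\}$, so the letter becomes $a_{i_k}$. Later changes further right are irrelevant, since each $g_i$ reads only one letter ahead. The prefix $u$ never influences the suffix, and the $\Gamma_2$-part fixes $v$ because $v$ contains no letters of index $4$--$6$. Hence position $|u|$ ends as $a_{i_1}\neq\tilde a_{i_1}$ for every $u$, which gives infinitely many moved words at once.

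With this in place, your Steps 1 and 3 go through. Your $\tilde a_4^{\,k}$-prefix trick is a valid alternative way to obtain infinitely many moved words. The triviality of $\Gamma_1\cap\Gamma_2$ is simpler than your block argument suggests; note that a letter of index $4$--$6$ after a block does not behave like $\#$ for $\Gamma_1$. Elements of $\Gamma_1$ only alter coordinates carrying letters of index $1$--$3$, and elements of $\Gamma_2$ only those carrying index $4$--$6$, so an element of both alters nothing.
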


\begin{proof}
    Firstly we show that for $1\le i\le3$ and $4\le j\le 6$, we have that $f_if_j=f_jf_i$. Denote $\{\#\}\cup\{a_{i'}:1\le i'\ne i\le3\}$ by $B_i$, denote $\{\#\}\cup\{a_{j'}:4\le j'\ne j\le6\}$ by $B_j$, note that $B_i$ and $B_j$ are invariant sets under $f_i$ and $f_j$. Let $x\in A^\mathbb{Z}$ and $k\in\mathbb{Z}$, we have 3 cases:

    \noindent\textbf{Case 1}. $x(k)\ne a_i,\tilde{a}_i,a_j,\tilde{a}_j$.

    In this case we always have $f_if_j(x)(k)=f_jf_i(x)(k)=x(k)$.

    \noindent\textbf{Case 2}. $x(k)= a_i$ or $x(k)=\tilde{a}_i$.

    Without loss of generality assume that $x(k)=a_i$. If $x(k+1)\in B_i$, then $f_i(x)(k)=\tilde{a}_i$, and $f_jf_i(x)(k)=\tilde{a}_i$ for $j\ne i$; we also have that $f_j(x)(k)=a_i$ and $f_j(x)(k+1)\in B_i$ by the invariance of $B_i$ under $f_j$, then $f_if_j(x)(k)=\tilde{a}_i=f_jf_i(x)(k)$. If $x(k+1)\notin B_i$, then $f_i(x)(k)={a}_i$, and $f_jf_i(x)(k)={a}_i$ for $j\ne i$; we also have that $f_j(x)(k)=a_i$ and $f_j(x)(k+1)\notin B_i$ by the invariance of $B_i$ under $f_j$, then $f_if_j(x)(k)={a}_i=f_jf_i(x)(k)$.

     \noindent\textbf{Case 3}. $x(k)= a_j$ or $x(k)=\tilde{a}_j$.

     Similar to Case 2, $f_if_j(x)(k)=f_jf_i(x)(k)$. 
     
     By the arbitrariness of $x$ and $k$, we have that $f_if_j=f_jf_i$. 
     
     Then to show $\Gamma$ is isomorphic to $(\mathbb{Z}_2*\mathbb{Z}_2*\mathbb{Z}_2)^2$, we just need to show for every finite sequence $i_1\cdots i_n\in \{1,2,3\}^*$ and $j_1\cdots j_m\in \{4,5,6\}^*$ such that $m+n\ge1$ and for every $1\le k<n$, $i_k\ne i_{k+1}$, for every $1\le k<m$, $j_k\ne j_{k+1}$, we have that $f_{i_1}\cdots f_{i_n}f_{j_1}\cdots f_{j_m}\ne{\rm id}$. Denote $f_{i_1}\cdots f_{i_n}f_{j_1}\cdots f_{j_m}$ by $f$.\\

      \noindent\textbf{Claim 1}.   For infinitely many words $v$, $f^*_{i_1}\cdots f^*_{i_n}f^*_{j_1}\cdots f^*_{j_m}(v)\ne v$.\\
     
\begin{proof}
      Without loss of generality assume that $n\ne 0$. Since for $1\le i\le3$ and $4\le j\le 6$, we have $f_if_j=f_jf_i$, then we also have that $f^*_if^*_j=f^*_jf^*_i$. For every word $w\in \{a_1,a_2,a_3\}^*$, inductively it's easy to get that the $(|w|+n-k)$'th position of $f^*_{i_k}\cdots f^*_{i_n}(w\tilde{a}_{i_1}\cdots \tilde{a}_{i_n})$ is $a_{i_{n-k+1}}$ for every $1\le k\le n$, so the $|w|$'th position of $f^*_{i_1}\cdots f^*_{i_n}(w\tilde{a}_{i_1}\cdots \tilde{a}_{i_n})$ is $a_{i_1}$. The word $w\tilde{a}_{i_1}\cdots \tilde{a}_{i_n}$ is invariant under $f^*_4,f^*_5,f^*_6$, so the $|w|$'th position of $f^*_{i_1}\cdots f^*_{i_n}f^*_{j_1}\cdots f^*_{j_m}(w\tilde{a}_{i_1}\cdots \tilde{a}_{i_n})$ is $a_{i_1}$, which implies that $f^*_{i_1}\cdots f^*_{i_n}f^*_{j_1}\cdots f^*_{j_m}(w\tilde{a}_{i_1}\cdots \tilde{a}_{i_n})\ne w\tilde{a}_{i_1}\cdots \tilde{a}_{i_n}$. This ends the proof of the claim.
      \end{proof}

      By Definition \ref{def} it's routine to check that $f^*_{i_1}\cdots f^*_{i_n}f^*_{j_1}\cdots f^*_{j_m}$ represents $f$, by Claim 1 we have that $f^*_{i_1}\cdots f^*_{i_n}f^*_{j_1}\cdots f^*_{j_m}\ne{\rm id}$, so we get that $f_{i_1}\cdots f_{i_n}f_{j_1}\cdots f_{j_m}\ne{\rm id}$. Also by the claim, $f$ is not an almost
trivial element. This ends the proof.

\end{proof}

 Let $E$ be the equivalence relation induced by action of $\Gamma$ on $\mathcal{S}(A)$. 
 
 \begin{theorem}
     The equivalence relation $E$ is non-treeable and non-amenable.
 \end{theorem}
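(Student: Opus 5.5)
We combine Theorem \ref{1} with the preceding theorem and then apply the standard rigidity results for measure-preserving free actions of non-amenable groups of this product form.

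\emph{Step 1: the action is free and measure-preserving.} By the preceding theorem, $\Gamma$ is a countable subgroup of ${\rm Aut}(A^\mathbb{Z},\#)$ with no almost trivial non-identity element. Part (2) of Theorem \ref{1} therefore gives a $\Gamma$-invariant Borel probability measure $\mu$ on $\mathcal{S}(A)$ for which the action is free $\mu$-a.e. Discarding the invariant null set where freeness fails, we obtain an invariant conull Borel set $Y\subset\mathcal{S}(A)$ on which $\Gamma\cong(\mathbb{Z}_2*\mathbb{Z}_2*\mathbb{Z}_2)^2$ acts freely and preserves $\mu$.

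\emph{Step 2: non-amenability.} Suppose $E$ were amenable. Then $E\restriction Y$ would be amenable, since amenability passes to invariant Borel subsets. It is a standard fact (Zimmer, see \cite[Section 7.4]{main} and \cite{JKL}) that if a countable group acts freely and preserves a probability measure, and the induced orbit equivalence relation is $\mu$-amenable, then the group is amenable. However, $\mathbb{Z}_2*\mathbb{Z}_2*\mathbb{Z}_2$ contains a free subgroup of rank $2$: the kernel of the natural map onto $\mathbb{Z}_2^3$ restricted to the index-$2$ subgroup of even words is a nonabelian free group. Hence $\Gamma$ is not amenable, and this contradiction shows that $E$ is non-amenable.

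\emph{Step 3: non-treeability.} This step relies on the known obstruction for products. If $\Delta_1,\Delta_2$ are countable groups with $\Delta_1$ infinite and $\Delta_2$ non-amenable, then no free probability-measure-preserving action of $\Delta_1\times\Delta_2$ generates a treeable equivalence relation, even $\mu$-a.e. This is the Adams–Spatzier/Gaboriau product theorem: such a relation has vanishing first $\ell^2$-Betti number while not being hyperfinite, or one can appeal directly to \cite[Theorem 6.4]{paper} and \cite{JKL}, where this obstruction is used. Apply it with $\Delta_1=\Delta_2=\mathbb{Z}_2*\mathbb{Z}_2*\mathbb{Z}_2$, which is infinite and non-amenable by Step 2. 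Then $E\restriction Y$ is not $\mu$-treeable. If $E$ admitted a Borel treeing $G$, its restriction to $Y$ would be a Borel treeing of $E\restriction Y$, which is impossible. Hence $E$ is non-treeable.

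The main obstacle is Step 3, specifically confirming that the cited product theorem applies in exactly this setting: the action is only a.e.\ free, and $\mu$ is only invariant, not necessarily ergodic. If ergodicity were needed, the plan is to pass to an ergodic component. Almost every ergodic component is still invariant and a.e.\ free, and the obstruction for products is proved via cost or $\ell^2$-Betti numbers, both of which behave well under ergodic decomposition. So a treeing of $E$ would give a treeing on some ergodic component, which is again impossible.
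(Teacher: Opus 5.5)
Your proof is correct. Steps 1 and 2 coincide with the paper's argument: Theorem \ref{1}(2) supplies an invariant measure and a.e.\ freeness, and then Zimmer's theorem (cited in the paper as \cite[Theorem 7.4.8]{main}) together with the non-amenability of $\Gamma$ gives non-amenability of $E$.

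For non-treeability your route differs slightly from the paper's:
\begin{itemize}
\item The paper restricts to a subgroup $F_2\times F_2\le\Gamma$. It uses Pemantle--Peres \cite{PP} to obtain a non-treeable Borel subrelation $F\subseteq E$, and then invokes \cite[Theorem 3.3(iii)]{JKL}, which says Borel subrelations of treeable relations are treeable.
\item You apply the product obstruction for $\Delta_1\times\Delta_2$ ($\Delta_1$ infinite, $\Delta_2$ non-amenable) directly to $\Gamma\cong\Delta\times\Delta$, on the invariant conull free part.
\end{itemize}
Your version avoids the subrelation lemma, but it needs the general product theorem rather than only the $F_2\times F_2$ case.

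Your $\ell^2$ justification of that theorem is sound. Gaboriau's theory gives $\beta_1^{(2)}=0$ for a product of two infinite groups. An aperiodic treeable relation has $\beta_1^{(2)}=\mathrm{cost}-1$, and treeable relations of cost $1$ are hyperfinite. Since $E\restriction Y$ is not $\mu$-amenable by your Step 2, treeability would force $\beta_1^{(2)}>0$, a contradiction.

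None of this requires ergodicity, since cost and $\ell^2$-Betti numbers are defined for arbitrary pmp relations. So the passage to ergodic components in your last paragraph is unnecessary.

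One correction on attribution: Adams--Spatzier is the property (T) non-treeability result, not the product theorem. The product theorem should be credited to Pemantle--Peres, Adams, or Gaboriau.
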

 \begin{proof}

 By Theorem \ref{1} the action of $\Gamma$ on $\mathcal{S}(A)$ preserves a probability
measure and is a.e. free. By the result of Pemantle–Peres \cite{PP}, an equivalence relation induced by a measure preserving free action of $F_2\times F_2$ is not treeable. Since $\Gamma$ is isomorphic to $(\mathbb{Z}_2*\mathbb{Z}_2*\mathbb{Z}_2)^2$, which contains a copy $F_2\times F_2$ of as a subgroup, then we have some Borel equivalence relation $F\subset E$ that is not treeable. By \cite[Theorem 3.3(iii)]{JKL}, $E$ is non-treeable.

The group $\Gamma$ is not an amenable group, then by \cite[Theorem 7.4.8]{main}, $E$ is non-amenable.

 \end{proof}
 \begin{theorem}
     The conjugacy relation on transitive one-sided subshifts with the alphabet set $\{0,1\}$ is non-treeable and non-amenable.
 \end{theorem}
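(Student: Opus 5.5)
We will find a Borel map $\Phi$ from $\mathcal{S}(A)$ into the space of transitive one-sided subshifts over $\{0,1\}$ such that $X\,E\,Y$ implies $\Phi(X)$ and $\Phi(Y)$ are conjugate. We will also require that $\Phi$ is countable-to-one, or more precisely that the conjugacy relation restricted to the image of $\Phi$ pulls back to a countable Borel equivalence relation $E'\supseteq E$ on $\mathcal{S}(A)$. Such an $E'$ contains the non-treeable, non-amenable relation $E$ of the preceding theorem. By \cite[Theorem 3.3(iii)]{JKL}, and its analogue for amenability (amenability passes to subrelations of countable Borel equivalence relations; see \cite{main}), $E'$ is non-treeable and non-amenable. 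Since $\Phi$ is countable-to-one, these properties transfer from $E'$ to the conjugacy relation on the $\Phi$-image, and then to the whole conjugacy relation on transitive one-sided subshifts over $\{0,1\}$. Here we use that treeability and amenability are inherited by restrictions to Borel (saturated) sets and by relations Borel reducible via countable-to-one reductions, again by \cite{JKL} and \cite{main}.

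\textbf{Step 1: one-sided version over $A$.} For $X(R)\in\mathcal{S}(A)$, take the one-sided subshift $X^+(R)\subset A^{\mathbb N}$ of one-sided sequences avoiding $\#w\#$ for $w\in R$. Each $f_i$ has block code $g_i$ depending only on coordinates $k,k+1$, that is, a one-sided (right) window. Hence $f_i$ induces a continuous shift-commuting map on $A^{\mathbb N}$. The verification in Lemma \ref{order 2} is purely local and forward-looking, so it still gives $f_i^2=\mathrm{id}$ on $A^{\mathbb N}$. Since $f_i$ is $\#$-preserving, it maps $X^+(R)$ onto $X^+(f_i^*(R))$. Thus $E$-equivalent points give conjugate one-sided subshifts. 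This is exactly why the $g_i$ were chosen with right-looking windows. Transitivity is not automatic, so we pass to the transitive subsystem: $X^+(R)$ is a "free" shift with only $\#$-bounded words forbidden, hence it is transitive. One can exhibit a dense orbit by concatenating all allowed words with long $\#$-free separators such as $a_1^N$. We will check that $X^+(R)$ itself is transitive; if it is not, we replace it by the orbit closure of a canonical point, which is still mapped by $f_i$ to the corresponding canonical point.

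\textbf{Step 2: recode to $\{0,1\}$.} Fix an injective code $c:A\to\{0,1\}^L$ with a marker structure, for instance $c(s)=1^{M}0\,u_s\,0$ with the $u_s$ distinct words that contain no run of $M$ ones. This makes the decomposition into $c$-blocks uniquely and locally recognizable. Let $\Phi(X)$ be the one-sided subshift generated by $c(X)$ together with all its shifts, a finite union of images under $S^j$ for $j<L$. Conjugacies of $A$-shifts given by right-window block codes then lift to conjugacies of the recoded shifts, since the block boundaries are locally determined.

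\textbf{Step 3: injectivity on classes.} We need that $\Phi(X)\cong\Phi(Y)$ forces only countably many $Y$ for a given $X$. This holds because $\Phi$ is injective, as $X$ is recovered from $\Phi(X)$ by decoding, and the conjugacy classes are countable. Transitivity is preserved by the recoding.

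The main obstacle is Step 1. There are two difficulties. First, we must make sure the one-sided maps are genuinely invertible on the relevant subshifts. A one-sided code that is not a homeomorphism could collapse points, so we must use that $f_i^2=\mathrm{id}$ holds literally for one-sided sequences. Second, we must make the transitive choice equivariant. We would first try to prove directly that each $X^+(R)$ is transitive.
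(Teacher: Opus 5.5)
Your reduction framework is the same as the paper's: a Borel injection from $\mathcal{S}(A)$ that is a homomorphism from $E$ to conjugacy, followed by \cite{JKL}. Your Step 1, which you single out as the main obstacle, is actually fine. The reasons are:
\begin{enumerate}
\item[(i)] $g_i$ reads only coordinates $k,k+1$, so the involution check is forward-looking;
\item[(ii)] $f_i([\#w\#])=[\#f_i^*(w)\#]$ holds one-sidedly;
\item[(iii)] $X^+(R)$ is just the set of restrictions of points of $X(R)$.
\end{enumerate}
One small fix for transitivity: long separators alone do not help, since $R$ may contain arbitrarily long words. Instead, choose the parity of each separator so that every newly created gap between consecutive $\#$'s has even length. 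This is safe because $R$ consists of odd-length words.

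\textbf{The gap is Step 2.} This is precisely the part the paper calls nontrivial. In the one-sided setting, local recognizability of block boundaries does not suffice to lift a right-window code.

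Write $c^+(x)=c(x_0)c(x_1)\cdots$. A point $S^jc^+(x)\in\Phi(X)$ begins with the truncated block $c(x_0)[j,L)$. Any $h$ that commutes with $S$ and lifts $f_i$ on $c^+(X^+(R))$ must satisfy $h(S^jc^+(x))=S^jc^+(f_i(x))$. The first $L-j$ letters of this are $c(g_i(x_0x_1))[j,L)$, and there is no freedom in that choice. Since $13$ symbols are coded over $\{0,1\}$, short suffixes $c(b)[j,L)$ cannot determine $b$. So $h$ is well defined only if $c(b)[j,L)=c(b')[j,L)$ implies $c(g_i(bb''))[j,L)=c(g_i(b'b''))[j,L)$ for all $b''$.

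Your sample code $1^M0u_s0$ does not guarantee this. Suppose $u_{a_1}$ and $u_{a_2}$ have the same last letter but $u_{\tilde a_1}$ does not.
\begin{enumerate}
\item[(i)] The points $x=a_1\#a_3a_3\cdots$ and $x'=a_2\#a_3a_3\cdots$ lie in every $X^+(R)$, and $S^{L-2}c^+(x)=S^{L-2}c^+(x')$.
\item[(ii)] However, $f_1(x)=\tilde a_1\#a_3a_3\cdots$ and $f_1(x')=a_2\#a_3a_3\cdots$.
\item[(iii)] So the forced images of this single point have different first letters.
\end{enumerate}

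The missing idea is to design the code for this compatibility. The paper's $\rho$ makes $\rho(a_i)$ and $\rho(\tilde a_i)$ differ only at positions $6,7$, while $\rho(b)[n,22)$ determines $b$ whenever $n<8$. Hence on a truncated block one either recovers $b$ (when $n<8$), or $g_1$ does not change the visible part (when $n\ge 8$). This is exactly what makes the paper's $h$ well defined, shift-commuting and inverted by $h'$.

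Without such a condition, your claimed conjugacy $\Phi(X)\cong\Phi(f_i\cdot X)$ is unproved. For codes like your example, the lift you describe is not even well defined.
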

 \begin{proof}
     By \cite[Theorem 3.3(iii)]{JKL} and \cite[Theorem 2.15(iii)]{JKL}, we just need a Borel injection $\varphi$ from $\mathcal{S}(A)$ to the class of transitive one-sided subshifts with the alphabet set $\{0,1\}$ such that for every $X_1,X_2\in \mathcal{S}(A)$, $X_1EX_2$ implies that $\varphi(X_1)$ and $\varphi(X_2)$ are conjugate.

     For $1\le i\le 6$, let $$\rho(a_i)=110100(00)^{7-i}11(00)^i$$ and $$\rho(\tilde{a}_i)=11010011(00)^{6-i}11(00)^i,$$ we also define $$\rho(\#)=(11)^501(11)^5.$$Then for $X\in \mathcal{S}(A)$, take $$\varphi(X)=\bigcup_{0\le n<22}S^n\{\rho(x(0))\rho(x(1))\rho(x(2))\cdots :x\in X\}.$$ For $x\in A^\mathbb{Z}$, we denote $\rho(x(0))\rho(x(1))\rho(x(2))\cdots$ by $\rho^+(x)$.

     Note that for every symbol $a\in A$, we have that $|\rho(a)|=22$, so we have that $S(\varphi(X))\subset \varphi(X)$. By its definition, the map $\rho^+$ is continuous from $A^\mathbb{Z}$ to $\{0,1\}^\mathbb{N}$. Then for every $X\in \mathcal{S}(A)$, $\{\rho^+(x) :x\in X\}$ is closed, $\varphi(X)$ is the union of 22 closed subsets of $\{0,1\}^\mathbb{N}$, so $\varphi(X)$ is closed. Then we get that $\varphi(X)$ is a one-sided subshift. By analyzing the occurrences of subwords $110100$ and $11011$, it's routine to check that $\{\rho(a):a\in A\}$ is uniquely readable, in other words, 
     \begin{equation}\label{1}
       \forall\, a,a',a''\in A\; \forall\,1\le n< 22\quad \rho(a)\ne(\rho(a')\rho(a''))[n,n+22) .
     \end{equation}
By (\ref{1}), for every $X\in \mathcal{S}(A)$, a word $v=b_0\cdots b_k\in A^*$ is a forbidden word of $X$ if and only if $\rho(b_0)\cdots\rho(b_n)$ is a forbidden word of $\varphi(X)$, so we have that $\varphi$ is an injection. For every $X\in \mathcal{S}(A)$, there is $x\in X$ such that $\{S^nx:n\in\mathbb{N}\}$ is dense in $X$, then $\{S^n(\rho^+(x)):n\in\mathbb{N}\}$ is dense in $\varphi(X)$, so $\varphi(X)$ is a transitive one-sided subshift. It's routine to check that the map $\varphi$ is Borel.

Then we just need to show for every $X_1,X_2\in \mathcal{S}(A)$, $X_1EX_2$ implies that $\varphi(X_1)$ and $\varphi(X_2)$ are conjugate. Recall that $f_i$ is defined in (\ref{f}). By Lemma \ref{order 2}, without loss of generality, assume that $f_1(X_1)=X_2$ and $f_1(X_2)=X_1$. \\

\noindent \textbf{Claim 2.} For $i=1,2$, $(\{S^n(\rho^+(x)):x\in X_i\})_{0\le n<22}$ is a clopen partition of $\varphi(X_i)$.\\

\begin{proof}
For every $0\le n\ne n'<22$, $i=1,2$, take $x\in X_i$, by the definition of $\rho^+$, there is $a\in A$ such that $S^{n}(\rho^+(x))[22-n,44-n)=\rho(a)$. If $S^{n}(\rho^+(x))\in\{S^{n'}(\rho^+(y)):y\in X_i\}$, by $n'\ne n$, there is $b,c\in A$ and $1\le m<22$ such that $$\rho(a)=S^{n}(\rho^+(x))[22-n,44-n)=\rho(b)\rho(c)[m,m+22),$$ contradicting (\ref{1}). So for every $0\le n\ne n'<22$, $i=1,2$ and $x\in X_i$, we have that $S^{n}(\rho^+(x))\notin\{S^{n'}(\rho^+(x)):x\in X_i\}$. Then we get that $\{S^n(\rho^+(x)):x\in X_i\}$ is disjoint from $\{S^{n'}(\rho^+(x)):x\in X_i\}$ when $0\le n\ne n'<22$, so $(\{S^n(\rho^+(x)):x\in X_i\})_{0\le n<22}$ is a partition of $\varphi(X_i)$. We have shown that $\{S^n(\rho^+(x)):x\in X_i\}$ is closed for every $1\le n<22$, so $(\{S^n(\rho^+(x)):x\in X_i\})_{0\le n<22}$ is a clopen partition of $\varphi(X_i)$.
\end{proof}

%Note that by (\ref{1}) for every $0\le n<22$, $i=1,2$, the set $\{S^n(\rho^+(x)):x\in X_i\}$ is the subset of $\varphi(X_i)$ that consists of sequences $y\in \varphi(X_i)$ satisfying $y[n,n+6)=110100$ or $y[n+8,n+13)=11011$, in particular $\{S^n(\rho^+(x)):x\in X_i\}$ is a clopen subset of $\varphi(X_i)$. Also by (\ref{1}), for every $0\le n< n'<22$, $i=1,2$, we have that $\{S^n(\rho^+(x)):x\in X_i\}$ is disjoint from $\{S^{n'}(\rho^+(x)):x\in X_i\}$. So $(\{S^n(\rho^+(x)):x\in X_i\})_{0\le n<22}$ is a clopen partition of $\varphi(X_i)$. 

Recall the definition of $g_1$ in (\ref{g}). For $k\in\mathbb{N}_+$, $0\le n<22$ and $y\in \{S^n(\rho^+(x)):x\in X_i\}$, let \begin{enumerate}
    \item [(1)] $h(y)[22k-n,22(k+1)-n)=\rho(a)$ if $y[22k-n,22(k+1)-n)=\rho(b)$, $y[22(k+1)-n,22(k+2)-n)=\rho(c)$ and $a=g_1(bc)$, where $a,b,c\in A$;
     \item [(2)] $h(y)[0,22-n)=\rho(a)[n,22)$ if $y[0,22-n)$ is a suffix of $\rho(b)$, $y[22-n,44-n)=\rho(c)$ and $a=g_1(bc)$, where $a,b,c\in A$.
\end{enumerate}
Using the same items (1) and (2) we define a map $h'$ from $\varphi(X_2)$.

We check that $h$ is a conjugacy map from $\varphi(X_1)$ to $\varphi(X_2)$. By the definition of $\rho$, for $8\le n<22$, and $1\le i\le6$, we always have $\rho(a_i)[n,22)=\rho(\tilde{a}_i)[22-n,22)$, so $h(y)[0,22-n)=y[0,22-n)$ for every $y\in\varphi(X_1)$. On the other hand, for $0\le  n<8$ and $y\in\{S^n(\rho^+(x)):x\in X_1\}$, there is a unique symbol $b\in A$ such that $y[0,22-n)=\rho(b)[n,22)$. So $h$ (and $h'$) is well defined. By definition we can see that $h$ and $h'$ are continuous and $hS=Sh$, $h'S=Sh'$. By the definition of $h$, for every $x\in X_1$, $h(\rho^+(x))=\rho^+(f_1(x))$. We know that $f_1(X_1)=X_2$, so $h(\rho^+(x))\in \varphi(X_2)$ for every $x\in X_1$, then $h$ is a map from $\varphi(X_1)$ to $\varphi(X_2)$ because $hS=Sh$. Similarly $h'$ is a map from $\varphi(X_2)$ to $\varphi(X_1)$. By the definition of $h$ and $h'$, $h'h={\rm id}_{\varphi(X_1)}$ and $hh'={\rm id}_{\varphi(X_2)}$, so we have that $h$ is a bijection from $\varphi(X_1)$ to $\varphi(X_2)$. Then we know that $h$ is a conjugacy map from $\varphi(X_1)$ to $\varphi(X_2)$. This ends the proof.
    
 \end{proof}

%\noindent\textbf{Remark:} Even if we just know a Polish group is an algebraic subgroup of some totally disconnected Polish group, by the automatic continuity of ${\rm Iso}(\mathbb{U})$ \cite{Sabok}, ${\rm Iso}(\mathbb{U})$ cannot embed to this group.

\bibliographystyle{plain}
 % We choose the "plain" reference style
\bibliography{bibliography}
\end{document}